% !TEX TS-program = pdflatexmk
\documentclass[a4paper,12pt]{amsart}
\usepackage{hyperref,fancyhdr,mathrsfs,amsmath,amscd,amsthm,amsfonts,latexsym,amssymb,stmaryrd}
\usepackage[all]{xy}

\voffset 3mm
\topmargin 10mm
\evensidemargin  5mm
\oddsidemargin  5mm
\textwidth  145mm
\textheight 205mm
\headsep 5mm
\marginparsep 2mm
\marginparwidth 20mm
\footskip 0mm
\headheight 5mm

\linespread{1.1}

\pagestyle{fancy}
\lhead[{\footnotesize \thepage}]{\footnotesize GENERALIZED QUATERNIONIC MANIFOLDS}
\chead[]{}
\rhead[\footnotesize Radu Pantilie]{\footnotesize \thepage}
\lfoot{}
\cfoot{}
\rfoot{}

\DeclareMathOperator{\dif}{d}

\DeclareMathOperator{\Lie}{\mathcal{L}}

\renewcommand{\H}{\mathscr{H}}
\newcommand{\V}{\mathscr{V}}

\newcommand{\J}{\mathcal{J}}

\renewcommand{\L}{\mathscr{L}}

\def \a{\alpha}

\def \b{\beta}

\def \o{\omega}

\def \phi{\varphi}
\def \Phi{\varPhi}
\def \p{\pi}
\def \r{\rho}

\def \t{\tau}

\def \R{\mathbb{R}}
\def \Hq{\mathbb{H}\,}
\def \C{\mathbb{C}\,}

\def\widecheckg{g^{\hspace*{-2.5pt}\vbox to 5pt{\hbox to
0pt{\LARGE$\check{}$}}}\hspace*{2pt}}

\def\widecheckl{\lambda^{\hspace*{-3.5pt}\vbox to 8pt{\hbox to
0pt{\LARGE$\check{}$}}}\hspace*{2pt}}

\hyphenation{pro-duct}

\begin{document}

\title{Generalized Quaternionic Manifolds}
\author{Radu Pantilie} 
\email{\href{mailto:radu.pantilie@imar.ro}{radu.pantilie@imar.ro}}
\address{R.~Pantilie, Institutul de Matematic\u a ``Simion~Stoilow'' al Academiei Rom\^ane,
C.P. 1-764, 014700, Bucure\c sti, Rom\^ania}
\subjclass[2010]{53D18, 53C26, 53C28}

\newtheorem{thm}{Theorem}[section]
\newtheorem{lem}[thm]{Lemma}
\newtheorem{cor}[thm]{Corollary}
\newtheorem{prop}[thm]{Proposition}

\theoremstyle{definition}

\newtheorem{defn}[thm]{Definition}
\newtheorem{rem}[thm]{Remark}
\newtheorem{exm}[thm]{Example}

\numberwithin{equation}{section}

\maketitle
\thispagestyle{empty}
\vspace{-10mm}
\section*{Abstract}
\begin{quote}
{\footnotesize 
We initiate the study of the generalized quaternionic manifolds by classifying the generalized 
quaternionic vector spaces, and by giving two classes of nonclassical examples of such manifolds. 
Thus, we show that any complex symplectic manifold is endowed with a natural (nonclassical) generalized quaternionic structure, 
and the same applies to the heaven space of any three-dimensional 
Einstein--Weyl space. In particular, on the product $Z$ of any complex symplectic manifold $M$ 
and the sphere there exists a natural generalized complex structure, with respect to which $Z$ 
is the twistor space of~$M$. 
}
\end{quote}

\section*{Introduction} 

\indent 
There are several natural notions which are similar or generalize the complex manifolds. 
One of these is based on the idea that, at each point, instead of a single linear complex structure, 
one considers a quaternionic family of linear complex structures. Then the corresponding integrability condition 
is in the spirit of Twistor Theory, thus obtaining the basic objects of Quaternionic Geometry (see \cite{IMOP} and the references therein).\\ 
\indent  
On the other hand, one may consider a linear complex structure on the direct sum of the tangent and cotangent bundles, 
which is orthogonal with respect to the canonical inner product (corresponding to the natural identification of the tangent bundle with its bidual). 
Then the corresponding integrability condition is provided by a generalization of the usual bracket on vector fields \cite{Courant}\,, thus   
leading to the Generalized Complex Geometry (see \cite{Gua-thesis}\,).\\ 
\indent  
In this note we initiate the natural unification of these two Geometries, under the framework of Twistor Theory.\\ 
\indent 
In Sections \ref{section:gcvs} and \ref{section:brqvs} we review the generalized complex and the quaternionic vector spaces, respectively. 
Then in Section \ref{section:gqvs} we classify the generalized quaternionic vector spaces (Theorem \ref{thm:linear_gq}\,). We mention that, 
although the formulation of this result is elementary, its proof requires a covariant functor which, to any pair formed of a quaternionic 
vector space and a real subspace, associates a coherent analytic sheaf over the Riemann sphere.\\ 
\indent 
Finally, in Section \ref{section:gqm} we introduce the notion of generalized quaternionic manifold and we present two classes of nonclassical examples,  
provided by the complex symplectic manifolds (Example \ref{exm:complex_symplectic}\,) 
and by the heaven space of any three-dimensional Einstein--Weyl space (Example \ref{exm:qKv}\,).

\section{Generalized complex vector spaces} \label{section:gcvs} 

\indent
A \emph{generalized linear complex structure} \cite{Gua-thesis} on a (finite dimensional real) vector space $V$ is a linear
complex structure on $V\times V^*$ which is orthogonal with respect to the inner product corresponding to the canonical
pairing of\/ $V$ and $V^*$.\\
\indent
Before taking a closer look to the generalized linear complex structures, we recall (see \cite{Gua-thesis}\,) the \emph{linear $B$-field transformations}.
These can be defined as the orthogonal transformations of $V\times V^*$ which restrict to the identity on $V^*$. More concretely,
any linear $B$-field transformation is given by $(X,\a)\mapsto(X,\iota_Xb+\a)$\,, for any $(X,\a)\in V\times V^*$, where $b$ is a
two-form on $V$.\\
\indent
Indeed, any linear $B$-field transformation maps $V$ onto an isotropic complement of $V^*$, and any such subspace of $V\times V^*$
is the graph of a unique two-form on $V$ (seen as a linear map from $V$ to $V^*$).\\
\indent
In what follows, we shall use several times this canonical correspondence between linear $B$-field transformations
and isotropic complements of $V^*$ in $V\times V^*$.\\
\indent
Let $\J$ be a generalized linear complex structure on $V$. There are two extreme cases: $\J V^*=V^*$ and $V^*\cap\J V^*=\{0\}$\,.\\
\indent
In the first case, we obtain, inductively, that there exists a $\J$-invariant isotropic complement $W$ of $V^*$ in $V\times V^*$.
Consequently, $\J V^*=V^*$ if and only if, up to a linear $B$-field transformation, we have
\begin{equation*}
\J=
\begin{pmatrix}
J & 0 \\
0 & J^*
\end{pmatrix}
\;,
\end{equation*}
where $J$ is a linear complex structure on $V$ and we have denoted by $J^*$ the opposite of the transpose of $J$.\\
\indent
Similarly, on taking $W=\J V^*$ we obtain that $V^*\cap\J V^*=\{0\}$ if and only if, up to a linear $B$-field transformation, we have
\begin{equation*}
\quad\J=
\begin{pmatrix}
0 & \o^{-1} \\
-\o & 0
\end{pmatrix} \;,
\end{equation*}
where $\o$ is a linear symplectic structure on $V$ (seen as a linear isomorphism from $V$ onto $V^*$).\\ 
\indent
Note that if $\J$ and $\mathcal{K}$ are linear generalized complex structures on $V$ and $W$, respectively,
then the product $\J\times\mathcal{K}$ is defined, in the obvious way, through the isomorphism
$(V\times W)\times(V\times W)^*=(V\times V^*)\times(W\times W^*)$\,.

\begin{prop}[\cite{Gua-thesis}]
Any linear generalized complex structure is, up to a linear $B$-field transformation, the product of the linear generalized
complex structures given by a (classical) linear complex structure and a linear symplectic structure.
\end{prop}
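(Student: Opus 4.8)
The plan is to reduce an arbitrary generalized linear complex structure $\J$ on $V$ to a direct sum of the two ``pure'' types already isolated in the discussion above, by finding a $\J$-invariant splitting of $V\times V^*$ along which $\J V^*$ meets $V^*$ trivially in one summand and equals it in the other. Concretely, I would begin by analysing the subspace $V^*\cap\J V^*$ of $V^*$. Since $\J$ is orthogonal and $J^2=-\mathrm{id}$, this intersection is $\J$-related to $\J V^*\cap\J^2 V^*=\J(V^*\cap\J V^*)$, so $U:=V^*\cap\J V^*$ has a $\J$-invariant preimage structure; more precisely, I would set $U=V^*\cap\J V^*$ and consider $U+\J U$, which is a $\J$-invariant isotropic subspace of $V\times V^*$ contained in $V^*+\J V^*$.

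Next I would build the classical complex summand. Let $U_0\subseteq V$ be the annihilator-type complement so that $U$ arises as the conormal space of a subspace $V_0\subseteq V$; the recipe already given (first extreme case, applied not to all of $V^*$ but to the $\J$-invariant part) yields, after a linear $B$-field transformation, a $\J$-invariant isotropic complement of $U$ inside $U+\J U$ of the form $\mathrm{graph}(J_0)$ for a linear complex structure $J_0$ on $V_0$, giving the $\begin{pmatrix}J_0&0\\0&J_0^*\end{pmatrix}$ block. On the complementary side, the image of $\J$ restricted to the remaining copy of the cotangent directions meets that copy trivially by construction, so the second extreme case applies there and produces, after a further $B$-field transformation, a linear symplectic form $\o_0$ on the complementary subspace $V_1$, giving the $\begin{pmatrix}0&\o_0^{-1}\\-\o_0&0\end{pmatrix}$ block. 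The point is that $B$-field transformations on the two summands assemble into a single $B$-field transformation on $V$, because the space of two-forms on $V$ contains $\Lambda^2 V_0^*\oplus\Lambda^2 V_1^*$ and we only need a two-form supported there.

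The main obstacle I anticipate is bookkeeping the $\J$-invariance of the intermediate isotropic complements: one must check that the inductive construction in the first extreme case can be carried out \emph{within} the $\J$-invariant subspace $U+\J U$ and compatibly with the splitting $V=V_0\oplus V_1$, i.e.\ that choosing the $B$-field on the $V_0$-part does not destroy the trivial-intersection property on the $V_1$-part. This is handled by performing the two $B$-field transformations successively and noting that a $B$-field transformation supported on $\Lambda^2V_0^*$ acts trivially on the $V_1\times V_1^*$ block; I would phrase this as: the two reductions already established are performed inside $\J$-invariant complementary summands, and a $B$-field transformation of $V$ restricting to the prescribed ones on $V_0$ and $V_1$ exists because we may take $b=b_0\oplus b_1\in\Lambda^2V_0^*\oplus\Lambda^2V_1^*\subseteq\Lambda^2V^*$. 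Once the two blocks are in standard form simultaneously, $\J$ is manifestly the product of a classical linear complex structure on $V_0$ and a linear symplectic structure on $V_1$, which is the assertion.
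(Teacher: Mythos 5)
There is a genuine gap, and it lies exactly at the step that is the heart of the paper's proof. With your choice $U=V^*\cap\J V^*$ one has $\J U=U$ (since $\J^2=-1$ gives $\J U=\J V^*\cap V^*=U$), so $U+\J U=U$: it is indeed $\J$-invariant and isotropic, but it cannot contain any nonzero complement of $U$, so there is no ``$\J$-invariant isotropic complement of $U$ inside $U+\J U$ of the form $\mathrm{graph}(J_0)$''. A graph producing the complex-type block must sit inside a nondegenerate $\J$-invariant subspace of dimension $2\dim U$ containing $U$ as a maximal isotropic subspace, and your proposal never constructs such a subspace. The paper obtains it by the dual choice: it first proves $(V^*\cap\J V^*)^{\perp}=V^*+\J V^*$, then picks a complement $U_1$ of $V^*\cap\J V^*$ in $V^*$, checks that $U_1\cap\J U_1=\{0\}$ and that $U_1+\J U_1$ is nondegenerate, and finally takes the orthogonal complement of $U_1+\J U_1$; that orthogonal complement is the nondegenerate $\J$-invariant subspace in which $V^*\cap\J V^*$ is maximal isotropic (the complex factor), while $U_1+\J U_1$ carries the symplectic factor.

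The second gap concerns the symplectic block: you fix a splitting $V=V_0\oplus V_1$ chosen only so that $U$ is the annihilator of $V_1$, and then assert that ``the second extreme case applies there''. But applying that case requires $\J$ (possibly after a $B$-field transformation) to preserve the summand associated with $V_1$; for a splitting of $V$ chosen independently of $\J$ this invariance is precisely what has to be proved, and no argument is given. The obstacle you flag at the end is the right one, but your resolution treats only the easy part (assembling $b=b_0\oplus b_1\in\Lambda^2V_0^*\oplus\Lambda^2V_1^*$ into one $B$-field); it does not produce a $\J$-invariant orthogonal decomposition of $V\times V^*$. Once you replace your $U+\J U$ by the pair $U_1+\J U_1$ and $(U_1+\J U_1)^{\perp}$ as above, both blocks are $\J$-invariant by construction and the required $B$-field is read off from the isotropic complement of $V^*$ that they determine, which is the paper's argument.
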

\begin{proof}
If $\J$ is a generalized linear complex structure on $V$ then
$$\bigl(V^*\cap\J V^*\bigr)^{\perp}=\bigl(V^*\bigr)^{\perp}+\J\bigl(V^*\bigr)^{\perp}=V^*+\J V^*\;.$$
\indent
Thus, if\/ $U$ is a complement of\/ $V^*\cap\J V^*$ in $V^*$ then $U\cap\J U=\{0\}$ and $U+\J U$ is nondegenerate
(with respect to the canonical inner product).\\
\indent
Then the orthogonal complement of $U+\J U$ is a nondegenerate complex vector subspace of
$(V\times V^*,\J)$ for which $V^*\cap\J V^*$ is a maximal isotropic subspace.\\
\indent
It follows that, up to a linear $B$-field transformation, $(V,\J)$ is the product of two generalized
complex vector spaces $(V_{\!1},\J_1)$ and $(V_{\!2},\J_2)$ satisfying $\J_1 V_{\!1}^*=V_{\!1}^*$ and $V_{\!2}^*\cap\J_2 V_{\!2}^*=\{0\}$\,.
\end{proof}

\indent 
The following fact will be used later on. 

\begin{lem} \label{lem:fyt} 
Let $\r:E\to U$ be a surjective linear map and let $V=U\times({\rm ker}\r)^*$.\\ 
\indent  
Then any section of $\r$ induces an isomorphism $V\times V^*=E\times E^*$ which 
preserves the canonical inner products. Moreover, any two such isomorphisms differ by a linear $B$-field transformation. 
\end{lem}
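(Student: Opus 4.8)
\emph{Plan.} Put $K={\rm ker}\,\r$, so that $V=U\times K^*$; we also have the exact sequence $0\to K\to E\xrightarrow{\,\r\,}U\to0$ and its dual $0\to U^*\xrightarrow{\,\r^*\,}E^*\to K^*\to0$. A section $s$ of $\r$ splits the first, so $s(U)$ is a complement of $K$ in $E$; dually it determines a linear map $\s\colon K^*\to E^*$ sending $\xi$ to its unique extension to $E$ that vanishes on $s(U)$, and then $\s(K^*)$ is a complement of $\r^*(U^*)$ in $E^*$. I would define $\Psi_s\colon V\times V^*\to E\times E^*$, where $V=U\times K^*$ and $V^*=U^*\times K$, by
\begin{equation*}
\Psi_s\bigl((u,\xi),(\phi,k)\bigr)=\bigl(s(u)+k,\ \r^*\phi+\s\xi\bigr)\;;
\end{equation*}
this is a linear isomorphism by the two direct sum decompositions just recorded. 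To check that it preserves the canonical inner products I would expand $\langle\Psi_s v_1,\Psi_s v_2\rangle$ using the four identities $\r\circ s={\rm id}_U$, $\r|_K=0$, $(\s\xi)|_K=\xi$ and $(\s\xi)|_{s(U)}=0$: all cross terms vanish and one is left with $\phi_1(u_2)+\xi_1(k_2)+\phi_2(u_1)+\xi_2(k_1)$, which is exactly $\langle v_1,v_2\rangle$ on $V\times V^*$, where $(\phi,k)\in V^*$ pairs with $(u,\xi)\in V$ by $\phi(u)+\xi(k)$. (Conceptually there is nothing to verify: both canonical pairings pair the $U$-summand with the $U^*$-summand and the $K$-summand with the $K^*$-summand, so the reindexing realized by $\Psi_s$ is automatically an isometry.)

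For the second assertion, let $s,s'$ be two sections of $\r$ and put $\t:=s'-s$; as $\r\circ\t=0$, this is a linear map $U\to K$. I would then compute $\Psi_s^{-1}\circ\Psi_{s'}$ directly, feeding $((u,\xi),(\phi,k))$ through $\Psi_{s'}$ and then through $\Psi_s^{-1}$ — the only discrepancies coming from $s(U)\ne s'(U)$ and from $\s\ne\s'$ — and using again the four identities above together with $s=s'-\t$. The result is
\begin{equation*}
\Psi_s^{-1}\circ\Psi_{s'}\colon\bigl((u,\xi),(\phi,k)\bigr)\longmapsto\bigl((u,\xi),(\phi-\xi\circ\t,\ k+\t(u))\bigr)\;.
\end{equation*}
Writing $\b\colon V\to V^*$ for the linear map $(u,\xi)\mapsto(-\xi\circ\t,\t(u))$, this is the transformation $(X,\a)\mapsto(X,\a+\b(X))$: it restricts to the identity on $V^*$ and carries $V$ onto the graph of $\b$, so by the correspondence recalled at the start of this section it is a linear $B$-field transformation of $V\times V^*$ exactly when that graph is isotropic, i.e. when $\b$ is skew. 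The computation $\langle\b(u_1,\xi_1),(u_2,\xi_2)\rangle=-\xi_1(\t(u_2))+\xi_2(\t(u_1))$ shows at once that interchanging the indices reverses the sign, so $\b$ is skew.

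The only genuinely substantive point — and the one I would take most care over — is that the ambiguity is absorbed by a $B$-field transformation of $V\times V^*$, not of $E\times E^*$; indeed $\Psi_{s'}\circ\Psi_s^{-1}$ is orthogonal but does \emph{not} restrict to the identity on $E^*$ in general. Everything else is a finite-dimensional bookkeeping of the four summands $U,\,K^*,\,U^*,\,K$ and their natural pairings: the place where a sign might slip, but where nothing is deep.
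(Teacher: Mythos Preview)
Your argument is correct and is precisely the computation underlying the paper's one-line proof, which merely observes that a section of $\r$ gives a splitting $E=U\times({\rm ker}\,\r)$ and leaves the rest implicit. You have spelled out exactly what that implicit ``rest'' is: the explicit isometry $\Psi_s$, the direct verification of the inner-product identity, and the check that $\Psi_s^{-1}\circ\Psi_{s'}$ is the $B$-field transformation associated to the skew map $\b(u,\xi)=(-\xi\circ\t,\t(u))$\,. Your closing remark that the ambiguity lands on the $V\times V^*$ side rather than the $E\times E^*$ side is a good observation and is indeed how the lemma is used later in the paper.
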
 
\begin{proof} 
This is a quick consequence of the fact that any section of $\r$ corresponds to a splitting $E=U\times({\rm ker}\r)$\,. 
\end{proof}

\section{Brief review of the quaternionic vector spaces} \label{section:brqvs} 

\indent
Let $\Hq$ be the (unital) associative algebra of quaternions. Its automorphism group is ${\rm SO}(3)$ acting trivially on $\R$
and canonically on ${\rm Im}\Hq$\,.\\
\indent
A \emph{linear hypercomplex structure} on a vector space $E$ is a morphism of associative algebras from $\Hq$ to ${\rm End}(E)$\,.
The automorphism group of $\Hq$ acts on the space of linear hypercomplex structures on $E$ in an obvious way, thus giving the
canonical equivalence relation on it.\\ 
\indent
A \emph{linear quaternionic structure} on a vector space is an equivalence class of linear hypercomplex structures.
A vector space endowed with a linear quaternionic (hypercomplex) structure is a \emph{quaternionic (hypercomplex) vector space}.\\
\indent
Let $E$ be a quaternionic vector space and let $\r:\Hq\to{\rm End}(E)$ be a representative of its linear quaternionic structure.
Then $Z=\r(S^2)$ is the space of \emph{admissible} linear complex structures on $E$. Obviously, $Z$ depends only of the linear
quaternionic structure of $E$.\\
\indent
Let $E$ and $E'$ be quaternionic vector spaces and let $Z$ and $Z'$ be the corresponding spaces of admissible linear complex structures,
respectively. A linear map $t:E\to E'$ is \emph{quaternionic}, with respect to some function $T:Z\to Z'$, if $t\circ J=T(J)\circ t$\,,
for any $J\in Z$. It follows that if $t\neq0$ then $T$ is unique and an orientation preserving isometry \cite{IMOP}\,.\\
\indent
The basic example of a quaternionic vector space is $\Hq^{\!k}$, $(k\in\mathbb{N})$\,, endowed with the linear quaternionic structure
given by its (left) $\Hq$-module structure. Furthermore, if $E$ is a quaternionic space, $\dim E=4k$\,, then there exists a linear quaternionic isomorphism from $\Hq^{\!k}$ onto $E$.\\
\indent
The classification (see \cite{vq}\,) of the real vector subspaces of a quaternionic vector space is much harder. It involves two
important particular subclasses, dual to each other.

\begin{defn}[\,\cite{fq}\,]
A \emph{linear CR quaternionic structure} on a vector space $U$ is a pair $(E,\iota)$\,, where $E$ is a quaternionic vector space 
and $\iota:U\to E$ is an injective linear map such that ${\rm im}\,\iota+J({\rm im}\,\iota)=E$, for any admissible linear complex structure $J$ on $E$.\\
\indent
A vector space endowed with a linear CR quaternionic structure is a \emph{CR quaternionic vector space}.
\end{defn}

\indent
By duality, we obtain the notion of \emph{co-CR quaternionic vector space}.\\
\indent 
It is convenient to work with the category whose objects are pairs formed of a quaternionic vector space $E$ and a real vector subspace $U$. Then a  
CR quaternionic vector space $(U,E,\iota)$ corresponds to the pair $({\rm im}\,\iota,E)$ (and, by duality, a co-CR quaternionic vector space 
$(U,E,\r)$ corresponds to the pair $({\rm ker}\,\r,E)$\,).\\ 
\indent 
There exists a covariant functor from this category to the category of coherent analytic sheaves over the sphere.  To describe it, 
let $Z\,(=S^2)$ be the space of admissible linear complex structures on $E$ and denote by $E^{0,1}$ the holomorphic vector subbundle
of $Z\times E^{\C}$ whose fibre, over each $J\in Z$, is the eigenspace of $J$ corresponding to $-{\rm i}$\,.\\
\indent
Let $\t$ be the restriction to $E^{0,1}$ of the morphism of trivial holomorphic vector bundles determined by the
projection $E\to E/U$. Denote by $\mathcal{U}_-$ and $\mathcal{U}_+$ the kernel and cokernel of $\t$, respectively.
Then $\mathcal{U}=\mathcal{U}_+\oplus\mathcal{U}_-$ is \emph{the (coherent analytic) sheaf of $(U,E)$} \cite{vq}\,.\\
\indent
For example, $(U,E)$ is a CR quaternionic vector space if and only if $\mathcal{U}=\mathcal{U}_-$ (which is a holomorphic
vector bundle whose Birkhoff--Grothendieck decomposition contains only terms of Chern number at most $-1$). The simplest
concrete examples are $(\Hq,\Hq)$ and $({\rm Im}\Hq,\Hq)$ which correspond to $2\mathcal{O}(-1)$ and $\mathcal{O}(-2)$\,, respectively;
their \emph{duals} are $(0,\Hq)$ and $(\R,\Hq)$ which correspond to $2\mathcal{O}(1)$ and $\mathcal{O}(2)$\,, respectively.\\
\indent
See \cite{fq}\,,\,\cite{fq-2} and \cite{vq} for further examples and the related Twistor Theory. 

\begin{rem} \label{rem:product_cr_q decomp}
1) If $E$ and $E'$ are quaternionic vector spaces endowed with the real vector subspaces
$U$ and $U'$, respectively, such that either the sheaf of $(U,E)$ or the sheaf of $(U',E')$ is torsion free then \cite[Corollary 4.2(i)]{vq}
the product $(U\times U',E\times E')$ is well-defined, up to a linear quaternionic isomorphism (that is, it does not depend of the particular
linear hypercomplex structures, representing the linear quaternionic structures of $E$ and $E'$, used to define $E\times E'$\,).\\
\indent
2) Let $(U,E)$ be a pair formed of a quaternionic vector space $E$ and a real vector subspace $U$; denote by $\mathcal{U}$ the sheaf of $(U,E)$\,.\\
\indent
Then $\mathcal{U}_-$ is the holomorphic vector bundle of a canonically defined CR quaternionic vector space $(U_-,E_-)\subseteq(U,E)$\,.\\
\indent
Also, there exists pairs $(V,F)$ and $(U_t,E_t)$ such that $(V,F)$ corresponds to a co-CR quaternionic vector space
(that is, the projection $E\to E/V$ defines a linear co-CR quaternionic structure on $E/V$), the sheaf\/ $\mathcal{U}_t$ of $(U_t,E_t)$
is the torsion subsheaf of $\mathcal{U}$, and $\mathcal{U}_+=\mathcal{U}_t\oplus\mathcal{V}$\,, $(U,E)=(U_-,E_-)\times(U_t,E_t)\times(V,F)$\,,
where $\mathcal{V}$ is the holomorphic vector bundle of $(V,F)$\,.\\
\indent
Moreover, the filtration $\{0\}\subseteq(U_-,E_-)\subseteq(U_-,E_-)\times(U_t,E_t)\subseteq(U,E)$ is canonical \cite[Corollary 4.2(ii)]{vq} .
\end{rem}

\indent 
We end this section with the following fairly obvious fact which will be used later on. 

\begin{prop} \label{prop:from torsion_to_co-cr_q} 
Let $(U,E)$ and $(V,F)$ be pairs formed of a quaternionic vector space and a real subspace such that the sheaf of $(U,E)$ is torsion,  
whilst $(V,F)$ corresponds to a co-CR quaternionc vector space.\\ 
\indent 
Then any morphism from $(U,E)$ to $(V,F)$ is zero.  
\end{prop}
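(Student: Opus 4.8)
The plan is to translate the statement into a statement about coherent analytic sheaves over the Riemann sphere $Z\,(=S^2)$, using the covariant functor $(U,E)\mapsto\mathcal{U}$ described above, and then to exploit the fact that this functor sends a morphism of pairs to a morphism of sheaves. Since the sheaf of $(U,E)$ is torsion, it is a skyscraper-type sheaf supported on a finite subset of $Z$; and since $(V,F)$ corresponds to a co-CR quaternionic vector space, its sheaf $\mathcal{V}$ is a holomorphic vector bundle, i.e.\ locally free, hence torsion free. The key algebraic fact I would invoke is that there are no nonzero morphisms from a torsion sheaf to a torsion-free sheaf over a (smooth, one-dimensional, reduced) complex space: the image of such a morphism would be simultaneously a torsion sheaf (quotient of a torsion sheaf) and a subsheaf of a torsion-free sheaf, hence zero.

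The steps, in order, would be: \emph{(i)} Recall that a morphism $f:(U,E)\to(V,F)$ in the category of pairs is, by definition, a quaternionic linear map $E\to F$ carrying $U$ into $V$; apply the functor to obtain an induced morphism of sheaves $\mathcal{U}\to\mathcal{V}$ over $Z$. Here one must also check that $f$ being zero as a morphism of pairs is equivalent to the induced sheaf morphism being zero — this follows because the sheaf construction remembers $E$ (through $E^{0,1}\subseteq Z\times E^{\C}$) and the map on global sections of the ambient trivial bundles recovers $f^{\C}$, so a nonzero $f$ gives a nonzero sheaf morphism. \emph{(ii)} Observe that $\mathcal{U}=\mathcal{U}_+\oplus\mathcal{U}_-$, but because the sheaf of $(U,E)$ is assumed torsion we have $\mathcal{U}_-=0$ (the locally free part vanishes) and $\mathcal{U}=\mathcal{U}_+=\mathcal{U}_t$ is a torsion sheaf. \emph{(iii)} Since $(V,F)$ corresponds to a co-CR quaternionic vector space, $\mathcal{V}$ is a holomorphic vector bundle over $Z$, in particular torsion free. \emph{(iv)} Conclude, by the general nonexistence of nonzero maps from torsion to torsion-free sheaves over the sphere, that the induced sheaf morphism is zero, hence $f=0$.

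The main obstacle I expect is step \emph{(i)}: making precise that the functor is faithful enough to detect nonzero morphisms, or at least that a nonzero morphism of pairs cannot induce the zero morphism of sheaves. One has to unwind the definitions of $\mathcal{U}_\pm$ as kernel and cokernel of $\t$ (the restriction to $E^{0,1}$ of the bundle map induced by $E\to E/U$) and check naturality of this construction in $(U,E)$; granting naturality, a morphism $f$ inducing the zero sheaf map would have to kill $\mathcal{U}$, and since $E^{0,1}$ generically spans $E^{\C}$ (indeed $E^{0,1}_J\oplus\overline{E^{0,1}_J}=E^{\C}$ for every $J$), this would force $f^{\C}=0$ on all of $E^{\C}$, i.e.\ $f=0$. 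Alternatively — and this is probably the cleaner route, given the paper's style — one can bypass faithfulness entirely: suppose $f\neq0$; then $f$ restricted to the CR quaternionic subspace $(U_-,E_-)$ of $(U,E)$ is zero anyway since here $(U_-,E_-)=\{0\}$, and $f$ descends through the co-CR quotient structure of $(V,F)$, so one directly analyzes the linear-algebraic constraints: $f(U)\subseteq V$ together with quaternionic linearity and the torsion condition on $(U,E)$ (equivalently $\mathrm{im}\,\iota_U$ is ``large'' in a way dual to $V$ being ``small'') force $f=0$ by a dimension/incompatibility count on the eigenspace decompositions $E^{0,1}$ and $F^{0,1}$. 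Either way the heart of the matter is the torsion-versus-torsion-free dichotomy, and once that is in place the proof is a one-line consequence of functoriality.
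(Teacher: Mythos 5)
The paper itself offers no proof of this proposition (it is introduced as a ``fairly obvious fact''), so the only question is whether your proposal constitutes a complete argument, and it does not: the gap is exactly at the step you flag as (i), and it is not a technicality. Steps (ii)--(iv) are fine (torsion forces $\mathcal{U}_-=0$; the co-CR hypothesis makes $\tau_F$ fibrewise injective, so $\mathcal{V}=\mathcal{V}_+$ is a bundle; and there are no nonzero maps from a torsion sheaf to a torsion-free one), but the inference ``induced sheaf morphism zero $\Rightarrow f=0$'' requires the functor to detect nonzero morphisms, and it does not --- not even when the source has torsion sheaf. Concretely, take $E=F=\mathbb{H}$, $U=\mathbb{C}\subset\mathbb{H}$ a complex line preserved by the admissible structure $i$, $V=\mathbb{H}$, and $f=\mathrm{id}$: this is a nonzero morphism of pairs $(\mathbb{C},\mathbb{H})\to(\mathbb{H},\mathbb{H})$, the source sheaf is torsion (supported at $\pm i$), the target sheaf is $2\mathcal{O}(-1)$, hence torsion free, so the induced sheaf morphism vanishes although $f\neq0$. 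In other words, your steps (i)--(iv), if valid, would prove the false statement that any morphism from a torsion pair to any pair with torsion-free sheaf is zero; so the co-CR hypothesis must enter the detection step in an essential way, which your proposal never does. Your justification of faithfulness (``the map on global sections of the ambient trivial bundles recovers $f^{\mathbb{C}}$'') conflates the morphism of the two-term complexes $E^{0,1}\to Z\times(E/U)^{\mathbb{C}}$ with the induced morphism on the subquotients $\ker\tau\oplus\mathrm{coker}\,\tau$: the latter can vanish while the former does not. The fallback you offer (``a dimension/incompatibility count'') is not an argument.

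What is missing is a use of both hypotheses at the level of the vector spaces. One way to close the gap: vanishing of the induced map on cokernels says precisely that $\bar f\bigl(Z\times(E/U)^{\mathbb{C}}\bigr)\subseteq{\rm im}\,\tau_F$ (here the torsion hypothesis is used, together with local freeness of $\mathrm{coker}\,\tau_F$); since the co-CR condition gives $V^{\mathbb{C}}\cap F^{0,1}_K=\{0\}$ for every admissible $K$ on $F$, the sheaf ${\rm im}\,\tau_F$ is the subbundle with fibres $\bigl(V^{\mathbb{C}}\oplus F^{0,1}_K\bigr)/V^{\mathbb{C}}$, whence $f(E)^{\mathbb{C}}\subseteq V^{\mathbb{C}}\oplus F^{0,1}_K$ for every $K$. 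Decomposing a fixed $w\in f(E)$ accordingly, its $V^{\mathbb{C}}$-component is a holomorphic $V^{\mathbb{C}}$-valued function on the Riemann sphere, hence constant; since $\bigcap_K F^{0,1}_K=\{0\}$ this gives $w\in V$, so $f(E)\subseteq V$, and as $f(E)$ is a quaternionic subspace the condition $V\cap KV=\{0\}$ forces $f(E)=\{0\}$. (Alternatively, one may invoke the canonical, hence functorial, filtration recalled in Section 2, i.e.\ \cite[Corollary 4.2(ii)]{vq}, under which a torsion pair coincides with its second step while a co-CR pair meets that step trivially.) Either way, there is genuine content between ``the induced sheaf morphism vanishes'' and ``$f=0$'', and your proposal does not supply it.
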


\section{Generalized quaternionic vector spaces} \label{section:gqvs} 

\indent
The notions of linear quaternionic structure and generalized linear complex structure suggest the following.

\begin{defn}
A \emph{generalized linear quaternionic structure} on a vector space $V$ is a linear quaternionic structure on $V\times V^*$
whose admissible linear complex structures are orthogonal with respect to the inner product.\\
\indent
A \emph{generalized quaternionic vector space} is a vector space endowed with a generalized linear quaternionic structure.
\end{defn}

\indent
There are two basic classes of generalized quaternionic vector spaces.

\begin{exm} \label{exm:cr_q} 
To any co-CR quaternionic vector space $(U,E,\r)$ we associate on $V=U\times({\rm ker}\r)^*$, by using Lemma \ref{lem:fyt}\,,  
a generalized linear quarternionic structure which is unique, up to linear $B$-field transformations.\\ 
\indent  
We, thus, obtain \emph{the generalized quaternionic vector space given by the co-CR quaternionic vector space $(U,E,\r)$}.\\
\indent
Note that, this construction gives a (classical) quaternionic vector space if and only if $\r$ is an isomorphism; equivalently, $V=E$
as quaternionic vector spaces.\\ 
\indent 
By duality, we obtain the generalized quaternionic vector space given by a CR quaternionic vector space.  
\end{exm}

\begin{exm} \label{exm:gc}
Let $(V,J,\o)$ be a vector space endowed with a linear complex structure $J$ and a linear symplectic structure $\o$.
Denote by $\J_J$ and $\J_{\o}$ the generalized linear complex structures on $V$ given by $J$ and $\o$, respectively.\\
\indent
Then $\J_J\,\J_{\o}=-\J_{\o}\,\J_J$ if and only if $\o^{(1,1)}=0$\,. Thus, if $(V,J,\o)$ is a complex symplectic vector space
then $\bigl\{a\J_J+b\J_{\o}+c\J_{\o J}\,|\,(a,b,c)\in S^2\bigr\}$ defines a generalized linear quaternionic structure on $V$.\\
\indent
We, thus, obtain \emph{the generalized quaternionic vector space given by the complex symplectic vector space $(V,J,\o)$}.\\
\indent
Note that, if $a\neq\pm1$ then $a\J_J+b\J_{\o}+c\J_{\o J}$ is given, up to a linear $B$-field transformation,
by a linear symplectic structure.
\end{exm}

\indent
Here is the main result of this section.

\begin{thm} \label{thm:linear_gq} 
Any generalized linear quaternionic vector space is, up to a linear $B$-field transformation, a product of the
generalized quaternionic vector spaces given by a CR quaternionic vector space and a finite family of complex symplectic vector spaces; 
moreover, the factors are unique, up to ordering. 
\end{thm}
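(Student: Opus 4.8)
The plan is to reduce the statement to the linear algebra of the two model cases already isolated in Examples~\ref{exm:cr_q} and~\ref{exm:gc}, by working on the quaternionic vector space $E=V\times V^*$ rather than on $V$ itself, and exploiting the sheaf functor recalled in Section~\ref{section:brqvs}. Let $\J$ be a generalized linear quaternionic structure on $V$; thus $E=V\times V^*$ carries a linear quaternionic structure all of whose admissible complex structures are orthogonal with respect to the canonical pairing $\langle\ ,\ \rangle$. The first step is to attach to this situation the pair $(V^*,E)$ and its coherent analytic sheaf $\mathcal{U}$ over the Riemann sphere $Z$. Because $V^*$ is a Lagrangian (in particular isotropic) subspace of $E$ of half dimension, one expects the orthogonality hypothesis on $Z$ to force a strong self-duality constraint on $\mathcal{U}$: concretely, I would show that the canonical pairing identifies the annihilator $(V^*)^{\perp}=V^*$, and that under this identification the morphism $\tau$ defining $\mathcal{U}_-$ and $\mathcal{U}_+$ is, up to the Serre-duality twist by $\mathcal{O}(-2)$, its own transpose. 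Hence $\mathcal{U}_+\cong\mathcal{U}_-^{*}\otimes\mathcal{O}(-2)$ and, more importantly, the torsion part of $\mathcal{U}$ is ``self-dual'' — every skyscraper summand is paired with another of complementary type.

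The second step is to run the canonical filtration of Remark~\ref{rem:product_cr_q decomp}(2): $(V^*,E)=(U_-,E_-)\times(U_t,E_t)\times(V_0,F_0)$, where $(U_-,E_-)$ is CR quaternionic, $(U_t,E_t)$ has torsion sheaf, and $(V_0,F_0)$ is co-CR quaternionic. The orthogonality/self-duality from step one must match the CR factor with the co-CR factor. Here is where I would invoke Lemma~\ref{lem:fyt} and Proposition~\ref{prop:from torsion_to_co-cr_q}: the self-duality of $E$ restricted to $E_-\times F_0$ should be nondegenerate and pair $E_-$ with $F_0$, forcing $F_0\cong E_-^{*}$ as quaternionic spaces and $(V_0,F_0)$ to be exactly the dual co-CR structure of $(U_-,E_-)$ — this is precisely the data producing, via Example~\ref{exm:cr_q}, the generalized quaternionic vector space attached to a single CR quaternionic vector space. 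The leftover torsion block $(U_t,E_t)$, together with the piece of $\langle\ ,\ \rangle$ it carries, is then the one that must be disentangled into symplectic summands: over each point $J\in Z$ the torsion of $\mathcal{U}$ concentrated there, being self-paired, gives a nondegenerate skew form, i.e.\ a complex symplectic structure, on an associated complex vector space, and reassembling over the antipodal pair $\{J,-J\}$ reproduces the real complex symplectic vector space $(V_j,J_j,\omega_j)$ of Example~\ref{exm:gc}. Carrying out this reassembly — checking that the quaternionic action on $E_t$ is the direct sum of copies of the model $V\times V^*$ with $\J_J,\J_\omega,\J_{\omega J}$, and that the residual $B$-field ambiguity from Lemma~\ref{lem:fyt} absorbs all choices — is the technical heart of the existence half.

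For uniqueness, the point is that the three pieces of the filtration $\{0\}\subseteq(U_-,E_-)\subseteq(U_-,E_-)\times(U_t,E_t)\subseteq(V^*,E)$ are canonical by Remark~\ref{rem:product_cr_q decomp}(2), so the CR factor and the torsion factor are determined by $\J$ alone. Within the torsion factor, the decomposition into indecomposable symplectic summands corresponds to the primary decomposition of the torsion sheaf $\mathcal{U}_t$ by support, which is again canonical; and an indecomposable complex symplectic vector space over a given antipodal pair is determined by its dimension. A morphism argument via Proposition~\ref{prop:from torsion_to_co-cr_q} shows there are no ``cross terms'' mixing a torsion summand with the CR/co-CR part, which rules out the spurious identifications that could otherwise spoil uniqueness up to ordering. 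I expect the main obstacle to be step two: making precise the sense in which the self-pairing on $E$ splits compatibly with the canonical filtration and simultaneously puts the torsion block in symplectic normal form — in other words, upgrading the abstract sheaf-theoretic decomposition to one that respects the orthogonal structure, so that each factor is not merely a quaternionic summand but carries its own generalized linear complex/quaternionic structure of the claimed model type.
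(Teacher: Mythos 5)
Your strategy is essentially the paper's own proof: it works with the sheaf of $(V^*,V\times V^*)$, uses the self-duality induced by the canonical pairing to match the CR part with the co-CR part (this is Proposition \ref{prop:linear_gq_bundle}), splits off the torsion block via Proposition \ref{prop:from torsion_to_co-cr_q} exactly as in the paper's proof of Theorem \ref{thm:linear_gq}, and reduces that block to complex symplectic factors indexed by the support of the torsion sheaf (Proposition \ref{prop:linear_gq_torsion}), with uniqueness coming from the canonical filtration. The ``technical heart'' you flag is carried out in the paper concretely: for each support pair $\pm\J_k$ one picks an admissible structure $\mathcal{K}$ anticommuting with $\J_k$, shows the corresponding block satisfies $\J_k(V^*)=V^*$, and applies the linear $B$-field transformation determined by the isotropic complement $\mathcal{K}(V^*)$ so that $\J_k$ and $\mathcal{K}$ become an anticommuting pair $(J,\o)$, i.e.\ a complex symplectic structure (also note the duality used is the plain one, $\mathcal{V}_-\cong(\mathcal{V}_+)^*$, with no $\mathcal{O}(-2)$ twist).
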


\indent
To prove Theorem \ref{thm:linear_gq} we, firstly, consider the case when the sheaf of the pair $(V^*,V\times V^*)$ is torsion free.

\begin{prop} \label{prop:linear_gq_bundle}
Let $V$ be a generalized quaternionic vector space such that the sheaf of $(V^*,V\times V^*)$ is torsion free.\\
\indent
Then, up to a linear $B$-field transformation, $V$ is given by a unique (up to isomorphisms) CR quaternionic vector space.
\end{prop}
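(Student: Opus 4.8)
The plan is to use the canonical inner product $g$ of $E:=V\times V^*$ to promote the orthogonality hypothesis to a self-duality of the pair $(V^*,E)$, and then to combine this with the canonical CR/co-CR decomposition recalled in Remark \ref{rem:product_cr_q decomp}(2). First I would observe that $g$, regarded as an isomorphism $g^{\flat}:E\to E^*$, is a linear quaternionic isomorphism onto $E^*$ endowed with its dual quaternionic structure: indeed, the $g$-orthogonality of an admissible linear complex structure $J$ of $E$ is equivalent to $g^{\flat}\circ J=-J^{\mathrm t}\circ g^{\flat}$ (with $J^{\mathrm t}$ the transpose), and as $J$ runs through the admissible linear complex structures of $E$ the maps $-J^{\mathrm t}$ run through those of $E^*$. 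A direct computation with $E^*=V^*\times V$ then shows that $g^{\flat}$ carries $V^*\subseteq E$ onto its annihilator $(V^*)^{\circ}\subseteq E^*$; hence $g^{\flat}$ is an isomorphism from the pair $(V^*,E)$ onto its dual pair $\bigl((V^*)^{\circ},E^*\bigr)$ --- that is, $(V^*,E)$ is self-dual.

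Next, since by hypothesis the sheaf $\mathcal{U}$ of $(V^*,E)$ is torsion free, its torsion subsheaf vanishes, so the torsion factor in Remark \ref{rem:product_cr_q decomp}(2) is trivial and we obtain $(V^*,E)=(U_-,E_-)\times(U_+,E_+)$, with $(U_-,E_-)$ a CR quaternionic vector space (of holomorphic vector bundle $\mathcal{U}_-$) and $(U_+,E_+)$ corresponding to a co-CR quaternionic vector space (of holomorphic vector bundle $\mathcal{U}_+$). Dualizing (the dual of a product of pairs being the product of the duals), $\bigl((V^*)^{\circ},E^*\bigr)$ becomes the product of the co-CR pair $(U_-,E_-)^{\vee}$ and the CR pair $(U_+,E_+)^{\vee}$. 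Since the filtration underlying this decomposition is canonical (Remark \ref{rem:product_cr_q decomp}(2)), the isomorphism $g^{\flat}$ of the previous paragraph must match CR factor with CR factor, giving $(U_-,E_-)\cong(U_+,E_+)^{\vee}$, equivalently $(U_+,E_+)\cong(U_-,E_-)^{\vee}$. Tracing this through, one finds that $E_-$ and $E_+$ are $g$-isotropic, that $g$ identifies $E_+$ with $E_-^*$, and that $U_+$ is carried to $U_-^{\circ}$; in other words, $E=V\times V^*$ is, together with its inner product and quaternionic structure, the quaternionic vector space $E_-\oplus E_-^*$ equipped with the canonical inner product, and $V^*=U_-\oplus U_-^{\circ}$.

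It remains to recognize this and to settle uniqueness. From $E=E_-\oplus E_-^*$ and $V^*=U_-\oplus U_-^{\circ}$ we read off $V=E/V^*\cong(E_-/U_-)\oplus U_-^*$ together with an identification $V\times V^*\cong E_-\oplus E_-^*$ preserving the inner products and the quaternionic structures; this is precisely the generalized quaternionic vector space attached, by duality in Example \ref{exm:cr_q}, to the CR quaternionic vector space $(U_-,E_-)$, the residual ambiguity in the identification being, by Lemma \ref{lem:fyt}, exactly the action of the linear $B$-field transformations. Conversely, if $V$ is given by a CR quaternionic vector space $\mathcal{C}$, then unwinding Example \ref{exm:cr_q} shows that $(V^*,V\times V^*)$ is the product of $\mathcal{C}$ with the co-CR quaternionic vector space dual to $\mathcal{C}$, so that $\mathcal{C}$ is the canonical CR factor of $(V^*,V\times V^*)$; as the latter is determined by $V$, so is $\mathcal{C}$ up to isomorphism, which gives uniqueness.

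The main obstacle is the middle step: one has to verify carefully that the self-duality isomorphism $g^{\flat}$ respects the canonical CR/co-CR filtration, and then extract from the abstract identification $(U_+,E_+)\cong(U_-,E_-)^{\vee}$ the concrete internal splitting $E=E_-\oplus E_-^*$ in which the subspaces $U_-$, $U_-^{\circ}$ and the inner product all occupy the positions needed in order to invoke Example \ref{exm:cr_q}. One should also be scrupulous, in the first step, about the conventions for the dual quaternionic structure --- the sign in $-J^{\mathrm t}$ and the induced identification of the two twistor spheres --- so that the assertion that $g^{\flat}$ is a quaternionic isomorphism is literally correct.
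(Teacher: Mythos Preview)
Your proposal is correct and follows essentially the same route as the paper: use the inner product to establish a self-duality of the pair $(V^*,V\times V^*)$, apply the CR/co-CR decomposition of Remark~\ref{rem:product_cr_q decomp}(2) (the torsion factor being trivial by hypothesis), and conclude that the CR and co-CR factors are mutual duals under the pairing, with $U_-$ the annihilator of $U_+$ under $E_-\cong E_+^*$. The only cosmetic difference is in the final step, where the paper exhibits an explicit isotropic complement $U'_-\times U'_+$ of $V^*$ (taking $U'_+$ to be a CR complement of $U_+$ in $E_+$ and $U'_-$ its annihilator in $E_-$), while you package the same conclusion through Lemma~\ref{lem:fyt} and Example~\ref{exm:cr_q}.
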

\begin{proof}
Let $\mathcal{V}$ be the sheaf of $(V^*,V\times V^*)$ and let $\mathcal{V}_{\pm}$ be its positive/negative parts.\\ 
\indent 
The canonical inner product induces an isomorphism $\mathcal{V}=\mathcal{V}^*$ which is equal to its transpose and preserves the decompositions into the positive 
and negative parts. As the positive/negative parts of $\mathcal{V}^*$ are $(\mathcal{V}_{\mp})^*$, this corresponds to an isomorphism 
$\mathcal{V}_-=(\mathcal{V}_+)^*$.\\ 
\indent 
Thus, under the decomposition $(V^*,V\times V^*)=(U_-,E_-)\times(U_+,E_+)$ into a product of a CR quaternionic and a co-CR quaternionic vector space, 
the canonical inner product corresponds to an isomorphism $E_-=E_+^*$ with respect to which $U_-$ is the annihilator of $U_+$\,.\\ 
\indent 
Therefore if $U'_+$ is a complement of $U_+$ in $E_+$ then its annihilator corresponds to a complement $U'_-$ of $U_-$ in $E_-$\,. 
Moreover, we may choose $U'_+$ so that $(U'_+,E_+)$ is a CR quaternionic vector space and, consequently, $(U'_-,E_-)$ is a co-CR quaternionic vector space. 
Then $U'_-\times U'_+$ is an isotropic complement to $V^*$ defining a linear $B$-field transformation which is as required. 
\end{proof}

\indent
Secondly, we consider the case when the sheaf of $(V^*,V\times V^*)$ is torsion.
 
\begin{prop} \label{prop:linear_gq_torsion} 
Let $V$ be a generalized quaternionic vector space such that the sheaf of $(V^*,V\times V^*)$ is torsion.\\
\indent
Then, up to a linear $B$-field transformation, $V$ is a product of generalized quaternionic vector spaces given by
complex symplectic vector spaces; moreover, the factors are unique, up to ordering. 
\end{prop}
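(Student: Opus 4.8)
The plan is to analyze the structure imposed by the torsion sheaf $\mathcal{V}$ of $(V^*,V\times V^*)$ together with the self-duality coming from the canonical inner product. First I would observe that, since the sheaf is torsion, the pair $(V^*,V\times V^*)$ lies, up to linear $B$-field transformation, in the ``torsion part'' of the canonical filtration of Remark \ref{rem:product_cr_q decomp}; that is, $(U_-,E_-)=\{0\}$ and the co-CR factor $(V,F)$ is trivial, so $V^*$ is essentially the $U_t$-type subspace of its own ``ambient'' quaternionic space. The key structural input is that the canonical inner product on $V\times V^*$ gives an isomorphism $\mathcal{V}\cong\mathcal{V}^*$ equal to its own transpose, exactly as in the proof of Proposition \ref{prop:linear_gq_bundle}; but now $\mathcal{V}$ is a torsion sheaf on the sphere, and a torsion sheaf is self-dual (under $\mathcal{H}om(-,\mathcal{O})$ or, more relevantly here, under the duality used for the sheaf of a pair) only in a very restricted way. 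This should force $\mathcal{V}$ to be a sum of length-one torsion sheaves supported at pairs of antipodal points (the real structure on the sphere pairs a point with its antipode), and each such antipodal pair of skyscrapers is precisely the sheaf $\mathcal{O}\oplus\mathcal{O}$ quotient realizing the generalized quaternionic vector space of a complex symplectic vector space as in Example \ref{exm:gc}.

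Concretely, the steps I would carry out are: (1) use Remark \ref{rem:product_cr_q decomp} and the torsion hypothesis to reduce to the situation where $\mathcal{V}=\mathcal{U}_t$ is pure torsion and $(V^*,V\times V^*)$ is built from it; (2) translate the inner-product self-duality $\mathcal{V}\cong\mathcal{V}^*$ with symmetric identification into a statement about the torsion sheaf, using that the real (antipodal) structure on $Z=S^2$ intertwines a point and its conjugate — a torsion sheaf compatible with a quaternionic vector space must be ``quaternionic,'' hence invariant under the antipodal map, so its support is a union of antipodal pairs and it carries no fixed points (there are none on $S^2$); (3) show that the indecomposable building block — a torsion sheaf of minimal length supported on one antipodal pair, self-dual via the inner product — corresponds to a four-dimensional generalized quaternionic vector space, and identify this block with the one produced by a complex symplectic vector space in Example \ref{exm:gc} (where one checks the two generalized complex structures $\mathcal{J}_J$ and $\mathcal{J}_\omega$ anti-commute iff $\omega^{(1,1)}=0$, giving the admissible complex structures and hence the sheaf); (4) assemble: decompose $\mathcal{V}$ into these blocks, lift the sheaf decomposition to a direct-sum decomposition of $(V^*,V\times V^*)$ using Corollary 4.2 of \cite{vq} quoted in Remark \ref{rem:product_cr_q decomp}, and check that the decomposition is compatible with the inner product so that it is realized by a linear $B$-field transformation exhibiting $V$ as a product of the pieces from Example \ref{exm:gc}.

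For uniqueness, I would argue that the sheaf $\mathcal{V}$ of $(V^*,V\times V^*)$ is a linear-$B$-field invariant (the sheaf construction is functorial and $B$-field transformations are isomorphisms of the pair), and the multiset of indecomposable torsion summands of a coherent sheaf on $\mathbb{P}^1$ is unique by the structure theory of torsion sheaves; since each complex symplectic factor is determined by its block in $\mathcal{V}$ (its dimension is the length of the block, and the complex symplectic structure is recovered, up to isomorphism, from the data of $\mathcal{J}$ on that summand), the factors are unique up to ordering.

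The main obstacle I anticipate is step (3): pinning down that the minimal self-dual torsion block is exactly four-dimensional and is realized by a complex symplectic vector space, rather than by some other, a priori possible, torsion configuration. This requires understanding precisely which torsion sheaves arise as sheaves of pairs $(U,E)$ — not every coherent sheaf does — and then matching the inner-product-induced symmetry against that constraint. Relatedly, promoting the abstract sheaf-level decomposition to a genuine $B$-field transformation of $V\times V^*$ (step (4)) needs care: one must choose the lifted complements to be simultaneously isotropic and quaternionic, exactly as in the last paragraph of the proof of Proposition \ref{prop:linear_gq_bundle}, but now with the extra bookkeeping forced by the torsion summands.
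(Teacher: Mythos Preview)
Your approach is genuinely different from the paper's and, as you yourself anticipate, has a real gap at steps (2)--(3).

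The paper does \emph{not} argue via self-duality of the sheaf $\mathcal{V}$. Instead it works concretely with the admissible generalized complex structures. In the basic case where the support of $\mathcal{V}$ is a single antipodal pair $\{\pm\J\}$, it picks $\mathcal{K}\in Z$ anti-commuting with $\J$, observes that $\mathcal{K}(V^*)$ is a $\J$-invariant isotropic complement of $V^*$, and uses this to show (after a short argument involving the co-CR piece) that in fact $\J(V^*)=V^*$. The $B$-field corresponding to $\mathcal{K}(V^*)$ then exhibits $\J$ and $\mathcal{K}$ as coming from a linear complex structure $J$ and a linear symplectic structure $\o$ with $\o^{(1,1)}=0$, i.e.\ from a single complex symplectic vector space. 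For general support $\{\pm\J_k\}_{k=1}^{l}$ the paper invokes \cite[Theorem 3.1]{vq} to obtain the orthogonal direct sum $V\times V^*=\bigoplus_k\bigl(U_k+\mathcal{I}(U_k)\bigr)$, one summand per antipodal pair, each handled by the $l=1$ case.

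Your step (2) is where the trouble lies. The isomorphism $\mathcal{V}\cong\mathcal{V}^*$ used in Proposition~\ref{prop:linear_gq_bundle} is between holomorphic vector bundles; for a torsion sheaf the sheaf $\mathcal{H}om(-,\mathcal{O})$ vanishes, and you do not specify what ``the duality used for the sheaf of a pair'' means for the torsion part or why it forces a length-one decomposition. Without this, you have no mechanism to constrain the torsion sheaf beyond its support being antipodally symmetric, which is automatic. Your step (3) is also off target: the indecomposable factor the paper produces is \emph{one complex symplectic vector space per antipodal pair in the support}, not a four-dimensional block. A complex symplectic vector space of real dimension $4n$ has its torsion sheaf concentrated at a single antipodal pair but with length $>1$ when $n>1$; your proposed decomposition into length-one pieces would over-refine the result and make the uniqueness clause ambiguous (all four-dimensional complex symplectic spaces are isomorphic, so ``unique up to ordering'' would become vacuous).

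In short: the paper's argument is elementary linear algebra with the quaternionic structures $\J,\mathcal{K}$ and one appeal to \cite{vq} for the direct-sum splitting over the support; your sheaf-level self-duality route would require a duality theory for the torsion part that you have not supplied and that is not available from the torsion-free case.
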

\begin{proof}
Let $Z$ be the space of admissible linear complex structures on $V\times V^*$ and let $\mathcal{V}$ be the sheaf of $(V^*,V\times V^*)$\,.\\
\indent
Suppose, firstly, that the support of $\mathcal{V}$ is formed of two (necessarily, antipodal) points $\pm\J\in Z$;
equivalently, $V^*\cap \J(V^*)\neq0$ and for any $\mathcal{I}\in Z\setminus\{\pm\J\}$, we have $V^*\cap\mathcal{I}(V^*)=\{0\}$\,.\\
\indent
Let $U=V^*\cap \J(V^*)$ and let $\mathcal{K}\in Z$ be such that $\J\mathcal{K}=-\mathcal{K}\J$\,. Then $\mathcal{K}(V^*)$ is a $\J$-invariant isotropic complement of $V^*$, 
and $\mathcal{K}U=\mathcal{K}(V^*)\cap\J(\mathcal{K}(V^*))$\,.\\ 
\indent 
It follows that, up to a suitable linear $B$-field transformation, we have that $(V,\J)=(V_1,\J_1)\times(V_2,\J_2)$ 
such that $V_1\times V_1^*=U\oplus\mathcal{K}U$, and, also, $\J_1$ and $\J_2$ are given by a linear complex and symplectic structures, respectively. 
Consequently, $E=U\oplus\mathcal{K}U$ is a quaternionic vector subspace of $V\times V^*$ which is nondegenerate with respect to the canonical inner product, 
and its orthogonal complement is $V_2\times V_2^*$; in particular, the latter is preserved by $\mathcal{K}$\,. Therefore $\mathcal{V}$ contains the sheaf 
of $(V_2^*,V_2\times V_2^*)$ which, necessarily, is the sheaf of a co-CR quaternionic vector space. Thus, $V_2=\{0\}$\,; equivalently, $\J(V^*)=V^*$.\\ 
\indent 
Then, under the linear $B$-field transformation corresponding to $\mathcal{K}(V^*)$\,, we have that $\J$ and $\mathcal{K}$
are given by a linear complex structure $J$ and a linear symplectic structure $\o$\,, respectively, which,
as $\J$ and $\mathcal{K}$ anti-commute, define a linear complex symplectic structure on $V$.\\
\indent
In general, the support of $\mathcal{V}$ is $\{\pm\J_k\}_{k=1,\ldots,l}\subseteq Z$\,; denote 
$U_k=V^*\cap\J_k(V^*)$\,, $k=1,\ldots,l$\,. If $\mathcal{I}\in Z\setminus\{\pm\J_k\}_{k=1,\ldots,l}$ then 
from \cite[Theorem 3.1]{vq} it follows that we have $U_k\cap\mathcal{I}(U_k)=\{0\}$ and $U_k+\mathcal{I}(U_k)$
is a quaternionic vector subspace of $V\times V^*$, for any $k=1,\ldots,l$\,; moreover, the sum 
$\sum_{k=1}^l\bigl(U_k+\mathcal{I}(U_k)\bigr)$ is direct.\\
\indent
Now, inductively (and similarly to the case $l=1$), we obtain an orthogonal decomposition 
$V\times V^*=\bigoplus_{k=1}^l\bigl(U_k+\mathcal{I}(U_k)\bigr)$\,, and the proof follows. 
\end{proof}

\indent
We can, now, give the:

\begin{proof}[Proof of Theorem \ref{thm:linear_gq}\,] 
Let $Z$ be the space of admissible linear complex structures on $V\times V^*$ and let $\mathcal{V}$ be the sheaf of $(V^*,V\times V^*)$\,.\\
\indent
There exists a quaternionic vector subspace $E_t$ of $V\times V^*$ such that, if we denote $U_t=E_t\cap V^*$, then the sheaf of $(U_t,E_t)$ 
is the torsion part of $\mathcal{V}$\,; in particular, $\dim U_t=\tfrac12\dim E_t$\,. 
Consequently, $U_t^{\perp}=E_t^{\,\perp}+V^*$ and $\dim(E_t^{\,\perp}\cap V^*)=\tfrac12\dim E_t^{\,\perp}$.\\ 
\indent 
By using Proposition \ref{prop:from torsion_to_co-cr_q}\,, we obtain that $E_t$ is nondegenerate with respect to the canonical inner product.  
Hence, $E_t^{\,\perp}$ is a nondegenerate quaternionic vector subspace of $V\times V^*$ for which $E_t^{\,\perp}\cap V^*$ is a maximal
isotropic subspace.\\ 
\indent
We have thus shown that $(V,V\times V^*)=(E_t^{\perp}\cap V^*,E_t^{\perp})\times(U_t,E_t)$\,. Therefore, up to a 
linear $B$-field transformation, $V$ is a product of two generalized quaternionic vector spaces $V_{\!1}$ and $V_{\!2}$ such that
the sheaf of $(V_{\!1}^*,V_{\!1}\times V_{\!1}^*)$ is torsion free, whilst the sheaf of $(V_{\!2}^*,V_{\!2}\times V_{\!2}^*)$ is torsion.\\
\indent
By Propositions \ref{prop:linear_gq_bundle} and \ref{prop:linear_gq_torsion}\,, the proof is complete.
\end{proof}

\section{Generalized quaternionic manifolds} \label{section:gqm} 

\indent 
Recall \cite{Gua-thesis} (see, also, \cite{OrnPan-hologen}\,) that a \emph{generalized complex structure} on a manifold $M$ is an orthogonal complex structure on 
$TM\oplus T^*M$ for which the space of sections of its ${\rm i}$-eigenbundle is closed under the Courant bracket \cite{Courant}\,, defined by 
\begin{equation*}
[(X,\a)\,;(Y,\b)]=\bigl([X,Y]\,;\Lie_X\!\b-\Lie_Y\!\a-\tfrac12\dif(\iota_X\b-\iota_Y\a)\bigr)\;,
\end{equation*}
for any sections $(X,\a)$ and $(Y,\b)$ of $TM\oplus T^*M$.\\ 
\indent 
A \emph{generalized almost quaternionic structure} on a manifold $M$ is a linear quaternionic structure on $TM\oplus T^*M$ 
compatible to the canonical inner product.\\ 
\indent 
Let $M$ be a generalized almost quaternionic manifold and let $Z$ be the bundle of admissible generalized linear complex structures on $TM\oplus T^*M$.  
Note that, $Z$ is the sphere bundle of an oriented Riemannian vector bundle of rank three; in particular, its fibres are Riemann spheres.\\  
\indent 
Any connection $D$ on $TM\oplus T^*M$, compatible with its linear quaternionic structure and the inner product, induces a connection $\mathscr{K}$ on $Z$;  
in particular, $TZ=\mathscr{K}\oplus({\rm ker}\dif\!\p)$\,, where $\p:Z\to M$ is the projection.\\ 
\indent 
At each $J\in Z$, let $\J_J$ be the direct sum of $J$ and the linear complex structure of ${\rm ker}\,\dif\!\p_J$\,, where we have identified $\mathscr{K}_J=T_{\p(J)}M$, 
through $\dif\!\p$\,. Then $\J$ is a generalized almost complex structure on $Z$. 

\begin{defn} 
If $\J$ is a generalized complex structure then $(M,D)$ is a \emph{generalized quaternionic manifold} and $(Z,\J)$ is its \emph{twistor space}. 
\end{defn} 

\indent 
Obviously, the (classical) quaternionic manifolds (see \cite{IMOP} and the references therein) are generalized quaternionic. Also, from \cite{Bre-ghK} it follows that 
the generalized hyper-K\"ahler manifolds are generalized quaternionic.\\ 
\indent 
We add the following two classes of examples. 

\begin{exm} \label{exm:complex_symplectic} 
Let $(M,J,\o)$ be a complex symplectic manifold. Then, by using Example \ref{exm:gc}\,, we obtain a generalized almost quaternionic structure 
on $M$ whose bundle of admissible linear complex structure 
is $M\times S^2$.\\ 
\indent 
Furthermore, let $D$ be any compatible connection on $TM\oplus T^*M$ which induces the trivial connection on $M\times S^2$. Then a straightforward 
calculation shows that the induced generalized almost complex structure on $M\times S^2$ is integrable. Thus, $(M,D)$ is a generalized quaternionic manifold. 
\end{exm} 

\begin{exm} \label{exm:qKv} 
Let $(N^3,c,\nabla)$ be a three-dimensional Einstein--Weyl space and let $(M^4,g)$ be its heaven-space (see \cite{PB} and the references therein). 
Then $(M^4,g)$ is an anti-self-dual Einstein manifold, with nonzero scalar curvature, and $\nabla$ corresponds to a twistorial submersion 
$\phi:(M^4,g)\to(N^3,c,\nabla)$ \cite{Hit-complexmfds} (see \cite{fq-2}\,).\\ 
\indent 
As $\phi$ is horizontally conformal, at each $x\in M^4$, the differential of $\phi$ defines a linear co-CR quaternionic structure on $T_{\phi(x)}N$ 
and, hence, $\phi$ induces a generalized almost quaternionic structure on $M^4$. Recall (Example \ref{exm:cr_q}\,) that this is induced by 
the isomorphism $TM=\H\oplus\V^*$, where $\V={\rm ker}\dif\!\phi$\,, $\H=\V^{\perp}$, and we have used the musical isomorphisms 
determined by $g$. Furthermore, through this isomorphism, the Levi--Civita connection of $(M^4,g)$ corresponds to a connection $D$ on $TM\oplus T^*M$ 
with respect to which $(M^4,D)$ is a generalized quaternionic manifold.\\ 
\indent 
The twistor space of $(M^4,D)$ is obtained from the twistor space $(Z,J)$ of $(M^4,g)$\,, as follows.\\ 
\indent  
There exists a (pseudo-)K\"ahler metric $\widetilde{g}$ on $Z$ with respect to which the projection $\p:(Z,\widetilde{g})\to(M,g)$ is a Riemannian 
submersion with geodesic fibres (see \cite{Bes-Einstein} and the references therein).\\ 
\indent 
On the other hand, $\phi$ corresponds to a one-dimensional holomorphic foliation $\widetilde{\V}$ on $(Z,J)$\,, orthogonal to the fibres of $\p$\,.\\ 
\indent 
Then $\L(T^{0,1}Z+\widetilde{\V},{\rm i}\,\o)$\,, where $\o$ is the K\"ahler form of $(Z,J,\widetilde{g})$\,, is the $-{\rm i}$ eigenbundle of the 
generalized complex structure $\J$ on $Z$ such that $(Z,\J)$ is the twistor space of $(M^4,D)$\,. 
\end{exm}

\end{document}